\newtheorem{theorem}{Theorem}[section]
\newtheorem{theo}[theorem]{Theorem}
\newtheorem{lem}[theorem]{Lemma}
\newtheorem{exa}[theorem]{Example}
\newtheorem*{Definition*}{Definition}
\def\qed{\hfill \ifhmode\unskip\nobreak\fi\quad\ifmmode\Box\else$\Box$\fi\\ }
\begin{document}

\title[Spin structures on complex projective spaces and circle actions]{Spin structures on complex projective spaces and circle actions}
\author{Donghoon Jang}
\address{Department of Mathematics, Pusan National University, Pusan, Korea}
\email{donghoonjang@pusan.ac.kr}
\thanks{MSC 2010: 58C30, 58J20, 47H10}
\thanks{Keywords: spin manifold, circle action, signature, $\Hat{A}$-genus, fixed point}
\thanks{This work was supported by a 2-Year Research Grant of Pusan National University.}
\begin{abstract}
It is known that the complex projective space $\mathbb{CP}^n$ admits a spin structure if and only if $n$ is odd. In this paper, we provide another proof that $\mathbb{CP}^{2m}$ does not admit a spin structure, by using a circle action.
\end{abstract}
\maketitle

\section{Introduction}

It is known that the complex projective space $\mathbb{CP}^n$ admits a spin structure if and only if $n$ is odd. This can be seen as follows. A manifold admits a spin structure if and only if its second Whitney-Stiefel class is zero \cite{Hae}. The second Whitney-Stiefel class is modulo 2 reduction of the first Chern class. The first Chern class of $\mathbb{CP}^n$ is $(n+1)x$, where $x$ is a generator of $H^2(\mathbb{CP}^n;\mathbb{Z})$. 

In this paper, we provide another proof that $\mathbb{CP}^{2m}$ does not admit a spin structure if $m>0$, by using a circle action.

\begin{theo} \label{p18}
The complex projective space $\mathbb{CP}^{2m}$ does not admit a spin structure, for any positive integer $m$.
\end{theo}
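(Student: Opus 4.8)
The plan is to argue by contradiction. I would assume that $\mathbb{CP}^{2m}$ admits a spin structure, put on it a standard linear circle action whose fixed points are isolated, and then extract a parity constraint on the tangential weights at the fixed points — a constraint that any spin manifold with a circle action must satisfy — and show that the chosen action violates it.

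First I would fix the action. Let $S^1$ act by $t\cdot[z_0:\cdots:z_{2m}]=[z_0:t^{a_1}z_1:\cdots:t^{a_{2m}}z_{2m}]$, where $a_0=0$ and $a_1,\dots,a_{2m}$ are distinct nonzero integers. The fixed points are exactly the $2m+1$ coordinate points $P_0,\dots,P_{2m}$, and they are isolated precisely because the $a_i$ are pairwise distinct. At $P_j$ the isotropy representation on the tangent space splits into two-dimensional weight spaces with weights $\{a_i-a_j : i\neq j\}$, so the sum of the weights at $P_j$ is
\[
\Sigma_j=\sum_{i\neq j}(a_i-a_j)=\Big(\textstyle\sum_i a_i\Big)-(2m+1)a_j .
\]

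The key step, and the one I expect to be the main obstacle, is the spin constraint. Since $\mathbb{CP}^{2m}$ is assumed spin, the circle action lifts to the principal $\mathrm{Spin}$-bundle, and at each isolated fixed point the induced representation on the spinor space has weights of the form $\tfrac12(\pm w_1\pm\cdots\pm w_n)$, where $w_1,\dots,w_n$ are the tangential weights. These half-sums are all integers exactly when $\sum_i w_i$ is even. Whether the lifted action is an honest $S^1$-action or requires passage to the connected double cover of $S^1$ is a global (connectedness) invariant of the total space, hence the parity of $\Sigma_j$ must be the \emph{same} at every fixed point. I would establish this carefully, noting that the same conclusion is encoded in the Atiyah--Bott--Segal--Singer fixed point formulas for the equivariant $\widehat{A}$-genus and signature: their integrality and rigidity force the tangential-weight data at distinct fixed points to be compatible, and the mod $2$ statement above is the elementary consequence that does the work here.

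Finally I would compute. Taking the concrete choice $a_i=i$ gives $\Sigma_j=(2m+1)(m-j)$, so $\Sigma_j\equiv m+j \pmod 2$. Thus $\Sigma_0\equiv m$ and $\Sigma_1\equiv m+1$ have opposite parities, directly contradicting the constraint that all $\Sigma_j$ share a common parity. Since this holds for every $m\geq 1$ (the action always has at least the two fixed points $P_0$ and $P_1$), the assumption that $\mathbb{CP}^{2m}$ is spin is untenable, proving Theorem~\ref{p18}. The delicate point throughout is justifying the parity invariance of $\Sigma_j$ from the spin hypothesis; once that lemma is in hand, the weight bookkeeping is routine.
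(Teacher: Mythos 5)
Your proposal is sound, but it takes a genuinely different route from the paper. The paper argues through index theory: by Atiyah--Hirzebruch, $\Hat{A}(M)=0$ for a compact oriented spin manifold with a circle action, so the equivariant fixed point formula
\[
0=\sum_{p\in M^{S^1}}\epsilon(p)\cdot\prod_{i=1}^{n}\frac{t^{w_{p,i}/2}}{1-t^{w_{p,i}}}
\]
must hold identically in $t$; its Lemma 3.1 then notes that a fixed point $q$ whose total weight sum is strictly smaller than that of every fixed point of opposite sign contributes a lowest-order term $\epsilon(q)\,t^{(\sum_i w_{q,i})/2}$ that nothing can cancel, and for the standard action on $\mathbb{CP}^{2m}$ the sums are $m(m+1)+k^2$ at $p_{m+k}$, so $q=p_m$ does the job. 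You instead use only the mod $2$ lifting obstruction: the $S^1$-action on a connected spin manifold lifts to the principal $\mathrm{Spin}$-bundle either as an honest $S^1$-action or only after passing to the double cover of $S^1$, and which alternative occurs is detected at each fixed point by the parity of the weight sum; since the dichotomy is global (the total space of the $\mathrm{Spin}$-bundle is connected, so whether $1\in\mathbb{R}$ acts as the identity or as the deck transformation is a single well-defined choice), all fixed points must share one parity, whereas your computation $\Sigma_j=(2m+1)(m-j)\equiv m+j\pmod 2$ shows the parity alternates with $j$. The two computations are consistent, since $m(m+1)+k^2\equiv m+j\pmod 2$ when $j=m+k$. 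What each approach buys: yours is more elementary, needing no index theorem at all, only covering-space theory and the fact that $t\mapsto\bigoplus_i R(w_i t)$ lifts to $\mathrm{Spin}(2n)$ exactly when $\sum_i w_i$ is even; the paper's Lemma 3.1 is a finer obstruction that can still apply when all fixed-point weight sums have the same parity and your test says nothing. One point to tighten: your side remark that the parity constraint is "encoded in the Atiyah--Bott--Segal--Singer fixed point formulas" is vaguer than necessary and should be dropped; the covering-space argument you sketch first (lift the $\mathbb{R}$-action to the $\mathrm{Spin}$-bundle by unique homotopy lifting, then compare the global identity-versus-deck-transformation dichotomy with the local parity at each fixed point) is the clean and complete justification, and writing it out is the only substantive work your proof still requires.
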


\section{Background}

Let $M$ be an orientable manifold with a Riemannian metric $g$. A \textbf{spin structure} on $M$ is an equivariant lift $P$ (called a principal $\textbf{Spin}(n)$-bundle) of the oriented orthonormal frame bundle $Q$ (called the principal $\textbf{SO}(n)$-bundle) over $M$ with respect to the double covering $\pi: \textbf{Spin}(n) \to \textbf{SO}(n)$. An orientable Riemannian manifold $(M,g)$ with a spin structure is called a \textbf{spin manifold}.

Let $M$ be a compact oriented spin manifold. The $\Hat{A}$-genus is the genus belonging to the power series $\frac{\sqrt{z}/2}{\sinh(\sqrt{z}/2)}$. Atiyah and Hirzebruch proved that if a compact oriented spin manifold $M$ admits a non-trivial smooth action of the circle group $S^1$, the equivariant index of the Dirac operator on (the spinor bundle of) $M$ is equal to the $\Hat{A}$-genus of $M$ and it vanishes.

\begin{theo} \cite{AH} \label{t23}
Let the circle act on a compact oriented spin manifold. Then $\Hat{A}(M)=0$.
\end{theo}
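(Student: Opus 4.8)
The plan is to prove the vanishing by combining the Atiyah--Singer index theorem with its equivariant refinement, the Atiyah--Bott--Segal--Singer fixed point formula, and then exploiting the fact that the equivariant index is a genuine finite character of the circle. First I would recall that, since $M$ is spin, the $\Hat{A}$-genus equals the index of the Dirac operator $D$ acting on sections of the spinor bundle $S$; this is the content of the index theorem, $\Hat{A}(M)=\operatorname{ind}D$. Next I would use the circle action: because $M$ is spin, the action lifts (possibly only through the connected double cover $\widetilde{S^1}\to S^1$) to the principal $\mathbf{Spin}$-bundle and hence to $S$, so $D$ may be taken to be invariant. Its equivariant index is then a virtual finite-dimensional representation of $S^1$ (or of $\widetilde{S^1}$), whose character $P(t)$ is a Laurent polynomial in $t$ (or in $t^{1/2}$). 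The spin structure is exactly what makes this lift, and hence the Laurent polynomial, available.

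The key observation is the identity $\Hat{A}(M)=\operatorname{ind}D=P(1)$: the ordinary index is the dimension of the virtual representation, which is its character evaluated at the identity $t=1$. Thus it suffices to prove $P\equiv 0$. To do this I would apply the fixed point formula to express $P(t)$ as a sum over the connected components $F$ of the fixed point set $M^{S^1}$, each contribution being an integral over $F$ of a characteristic class built from $\Hat{A}(TF)$ and from the normal bundle $N_F$, with the circle weights entering through factors of the form $(t^{v/2}-t^{-v/2})^{-1}$, where $v$ ranges over the nonzero rotation weights of $S^1$ on $N_F$.

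Now comes the crucial asymptotic analysis. Since the action is nontrivial (as the introduction's prose intends) and $M$ is connected, no component $F$ of $M^{S^1}$ is open, so each has positive even codimension and hence at least one nonzero normal weight. Regard $P$ as a Laurent polynomial on $\mathbb{C}^{\times}$. In each fixed point contribution every normal weight $v$ is nonzero, so each factor $(t^{v/2}-t^{-v/2})^{-1}$ tends to $0$ both as $t\to 0$ and as $t\to\infty$, while the remaining characteristic-class data of $F$ and $N_F$ contribute only constant coefficients after integration. Hence every summand, and therefore $P(t)$, tends to $0$ as $t\to 0$ and as $t\to\infty$. But a Laurent polynomial that vanishes at both ends of $\mathbb{C}^{\times}$ can have neither terms of nonnegative degree (ruled out by the limit at $\infty$) nor terms of nonpositive degree (ruled out by the limit at $0$), so it is identically zero. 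Therefore $P\equiv 0$, and in particular $\Hat{A}(M)=P(1)=0$.

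I expect the main obstacle to be justifying the two structural inputs precisely. First, one must verify that the $S^1$-action genuinely lifts to the spinor bundle, so that the equivariant index is a bona fide Laurent character; this is where the spin hypothesis is indispensable, and the possible passage to the double cover must be tracked so that the evaluation point $t=1$ is unaffected. Second, one must carry out the bookkeeping of the fixed point formula for components of positive dimension, checking that the characteristic classes of $F$ and $N_F$ enter only polynomially and do not interfere with the vanishing of the weight factors at $0$ and $\infty$. The case of isolated fixed points is the transparent model, where each contribution is simply $\pm\prod_{j}(t^{m_j/2}-t^{-m_j/2})^{-1}$, and the general case follows the same template.
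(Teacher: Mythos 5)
The paper contains no proof of Theorem \ref{t23}: it is quoted directly from Atiyah--Hirzebruch \cite{AH}, so the only meaningful comparison is with that source --- and your argument is essentially the classical Atiyah--Hirzebruch proof in its standard form: the equivariant index of the Dirac operator is a finite Laurent character $P$ in $t$ (or in $t^{1/2}$, after lifting the action through the double cover of $S^1$, which the spin condition guarantees), the Atiyah--Segal--Singer fixed point formula expresses it as a sum over fixed components each of whose terms tends to $0$ as $t\to 0$ and $t\to\infty$, and a Laurent polynomial vanishing at both ends is identically zero, giving $\Hat{A}(M)=P(1)=0$. Your reconstruction is correct, and you rightly flag the two delicate points (the lift to the spinor bundle and the bookkeeping over positive-dimensional components). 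Two refinements are worth making explicit. First, non-triviality of the action on \emph{each} component of $M$ is indispensable: as displayed in the paper the theorem omits the word ``non-trivial'' and is then literally false (a K3 surface with the trivial action has $\Hat{A}=2$); the paper's surrounding prose supplies the hypothesis, and your parenthetical remark handles it only under a connectedness assumption. Second, the identity between the character $P$ and the rational fixed-point expression holds a priori only at topological generators $t\in S^1$; since both sides are rational functions of $t^{1/2}$ agreeing on an infinite set, they agree on all of $\mathbb{C}^{\times}$, and only after this extension is it legitimate to take your limits $t\to 0$ and $t\to\infty$ along the positive real axis. With those two sentences added, your proof is complete and matches the cited source's method.
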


Let the circle act on a $2n$-dimensional oriented manifold $M$. Let $p$ be an isolated fixed point. The tangent space $T_pM$ at $p$ has a decomposition
\begin{center}
$T_pM=\bigoplus_{i=1}^n L_i$
\end{center}
into real 2-dimensional irreducibles $L_i$. For each $i$, we choose an orientation of $L_i$ so that the circle acts on $L_i$ by multiplication by $g^{w_{p,i}}$ for all $g \in S^1 \subset \mathbb{C}$, for some positive integer $w_{p,i}$. The positive integers $w_{p,i}$ are called \textbf{weights} at $p$. Let $\epsilon(p)=+1$ if the orientation on $M$ agrees with that on the representation space $\bigoplus_{i=1}^n L_i$, and $\epsilon(p)=-1$ otherwise. 
If $M$ admits a spin structure, the following formula holds.

\begin{theo} \cite{AH} \label{t25}
Let the circle act on a compact oriented spin manifold $M$ with a discrete fixed point set. Then
\begin{center}
$\displaystyle 0=\Hat{A}(M)=\sum_{p \in M^{S^1}} \epsilon(p) \cdot \prod_{i=1}^n \frac{t^{\frac{w_{p,i}}{2}}}{1-t^{w_{p,i}}}$
\end{center}
for all indeterminates $t$.
\end{theo}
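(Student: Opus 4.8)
The first equality $0=\Hat{A}(M)$ is exactly Theorem~\ref{t23}, so the real content of the statement is the fixed point expression, and the plan is to obtain it by applying the Atiyah--Bott--Segal--Singer equivariant index theorem to the Dirac operator. Since $M$ is spin, the chosen spin structure determines a spinor bundle $S=S^{+}\oplus S^{-}$ and an associated Dirac operator $D^{+}\colon\Gamma(S^{+})\to\Gamma(S^{-})$ whose ordinary index equals $\Hat{A}(M)$ by the Atiyah--Singer index theorem. Because the circle acts and, crucially, the action lifts to $S$ through the spin structure, $\ker D^{+}$ and $\operatorname{coker}D^{+}$ are finite-dimensional $S^{1}$-representations, so the equivariant index
\begin{equation*}
F(t)=\operatorname{tr}\bigl(t\mid\ker D^{+}\bigr)-\operatorname{tr}\bigl(t\mid\operatorname{coker}D^{+}\bigr)
\end{equation*}
is a virtual character of $S^{1}$, that is, a Laurent polynomial in $t$ with $F(1)=\Hat{A}(M)$.

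Next I would invoke the equivariant index theorem in its localized (Lefschetz fixed point) form. Since the fixed point set $M^{S^{1}}$ is discrete, there is no higher-dimensional fixed-point data and the theorem reduces $F(t)$ to a sum of purely local contributions,
\begin{equation*}
F(t)=\sum_{p\in M^{S^{1}}}L_{p}(t),
\end{equation*}
where each $L_{p}(t)$ is assembled from the trace of $t$ on the spinor fibres $S^{\pm}_{p}$ together with the determinant of $1-t^{-1}$ acting on $T_{p}M$. The remaining task is to evaluate $L_{p}(t)$ from the weight data at $p$.

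For the local term I would use the weight decomposition $T_{p}M=\bigoplus_{i=1}^{n}L_{i}$, which equips $T_{p}M$ with a complex structure in which $L_{i}$ is a complex line of weight $w_{p,i}$. The determinant factor then contributes $\prod_{i}(1-t^{w_{p,i}})$, while the half-spinor fibres $S^{\pm}_{p}$ contribute, through the spin lift, a numerator supplying the half-integer powers $t^{w_{p,i}/2}$. Using the elementary identity $\frac{t^{w/2}}{1-t^{w}}=\frac{1}{t^{-w/2}-t^{w/2}}$ to repackage these factors, the local contribution takes the form
\begin{equation*}
L_{p}(t)=\epsilon(p)\prod_{i=1}^{n}\frac{t^{\frac{w_{p,i}}{2}}}{1-t^{w_{p,i}}},
\end{equation*}
in which the sign $\epsilon(p)$ records the discrepancy between the given orientation of $M$ and the orientation of $\bigoplus_{i}L_{i}$, thereby absorbing the overall factor $(-1)^{n}$ coming from the orientation choice in the determinant. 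Summing over $p$ and combining $F(1)=\Hat{A}(M)=0$ (Theorem~\ref{t23}) with the rigidity fact that $F(t)$ is the constant $\Hat{A}(M)$ then yields $0=\Hat{A}(M)=\sum_{p}\epsilon(p)\prod_{i}\frac{t^{w_{p,i}/2}}{1-t^{w_{p,i}}}$ for all $t$.

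The main obstacle I anticipate is the local spinor computation, in particular the careful bookkeeping of the square roots $t^{w_{p,i}/2}$. These half-integer powers are exactly where the spin hypothesis is used: a spin structure is what allows the $S^{1}$-action to lift to $S^{\pm}$ and provides a globally consistent choice of sign for each square root across all fixed points, so that the numerator $\prod_{i}t^{w_{p,i}/2}$ is well defined. Checking that the half-spinor traces assemble into precisely this product, and that the residual signs combine with the determinant factor into the single orientation sign $\epsilon(p)$, is the delicate point; once it is settled, the statement follows formally from the equivariant index theorem, the rigidity of $F(t)$, and the vanishing already recorded in Theorem~\ref{t23}.
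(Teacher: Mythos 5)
The paper offers no proof of this statement at all: Theorem \ref{t25} is quoted directly from Atiyah--Hirzebruch \cite{AH}, so there is no in-paper argument to compare against. Your outline is, in substance, the standard proof from that source: apply the Atiyah--Bott--Segal--Singer equivariant index theorem to the Dirac operator, note that a discrete fixed point set reduces the equivariant index $F(t)$ to a sum of local terms, compute each local term from the weight decomposition of $T_pM$ (your identity $\frac{t^{w/2}}{1-t^{w}}=\frac{1}{t^{-w/2}-t^{w/2}}$ and the role of $\epsilon(p)$ as the orientation-comparison sign are both correct), and conclude using the vanishing of the index. So the approach is right and is the one the cited reference takes.

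Two points in your sketch deserve sharpening. First, you assert that the $S^1$-action lifts to the spinor bundle "through the spin structure"; in general only the double cover of $S^1$ need act on the principal $\mathrm{Spin}$-bundle, and Atiyah--Hirzebruch's argument splits into the two cases (lift of $S^1$ versus lift of its double cover) --- this is precisely the origin of the half powers $t^{w_{p,i}/2}$, so it cannot be waved through, and $F(t)$ may be a Laurent polynomial in $t^{1/2}$ rather than in $t$. Second, your final step leans on an unproved "rigidity fact that $F(t)$ is the constant $\Hat{A}(M)$," combined with Theorem \ref{t23}. This is logically adequate if rigidity is taken from the literature, but it is worth knowing that in \cite{AH} the constancy is not an input: one observes that each local term $\prod_i \frac{t^{w_{p,i}/2}}{1-t^{w_{p,i}}}$ tends to $0$ as $t\to 0$ and as $t\to\infty$, so the finite Laurent series $F(t)$, agreeing with the sum of local terms, must be identically zero. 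That limit argument proves the full identity "for all indeterminates $t$" directly, and simultaneously yields Theorem \ref{t23} rather than relying on it; as written, your appeal to rigidity plus Theorem \ref{t23} risks circularity if the rigidity you have in mind is itself proved by this very computation.
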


A standard linear action of the circle on the complex projective space $\mathbb{CP}^n$ has $n+1$ fixed points. In particular, we consider the following action.

\begin{exa} \label{e2}
Let the circle act on $\mathbb{CP}^{n}$ by
\begin{center}
$g \cdot [z_0:\cdots:z_m]=[z_0:g z_1: g^2 z_2:\cdots:g^{n} z_{n}]$.
\end{center}
The action has $n+1$ fixed points $p_0=[1:0:\cdots:0]$, $p_1=[0:1:0:\cdots:0]$, $\cdots$, $p_n=[0:\cdots:0:1]$. At each fixed point $p_i=[0:\cdots:0:1:0:\cdots:0]$, using local complex coordinates $(\frac{z_0}{z_i},\cdots,\frac{z_{i-1}}{z_i},\frac{z_{i+1}}{z_i},\cdots,\frac{z_{n}}{z_i})$, the circle acts near $p_i$ by
\begin{center}
$g \cdot (\frac{z_0}{z_i},\cdots,\frac{z_{i-1}}{z_i},\frac{z_{i+1}}{z_i},\cdots,\frac{z_{n}}{z_i})=(\frac{z_0}{g^i z_i},\cdots,\frac{g^{i-1} z_{i-1}}{g^i z_i},\frac{g^{i+1} z_{i+1}}{g^i z_i},\cdots,\frac{g^{n} z_{n}}{g^i z_i})$

$=(g^{-i} \frac{z_0}{z_i},\cdots,g^{-1} \frac{z_{i-1}}{z_i},g \frac{z_{i+1}}{z_i},\cdots,g^{n-i} \frac{z_{n}}{z_i})$
\end{center}
for all $g \in S^1 \subset \mathbb{C}$. Therefore, as complex $S^1$-representation, the weights at $p_i$ are $\{j-i\}_{0 \leq j \leq n, j \neq i}$. Since $p_i$ has $i$ negative weights as complex $S^1$-representation, as an oriented manifold, $\epsilon(p_i)=(-1)^i$ and the weights at $p_i$ as an oriented manifold are $\{|j-i|\}_{0 \leq j \leq n, j \neq i}$.
\end{exa}

\section{Proof}

For a compact oriented manifold admitting a circle action, the following lemma provides an obstruction to the existence of a spin structure.

\begin{lem} \label{l31}
Let $M$ be a $2n$-dimensional compact oriented manifold. Suppose that $M$ admits a circle action with a non-empty discrete fixed point set with the following property: there is a fixed point $q$ such that $\sum_{i=1}^n w_{q,i} < \sum_{i=1}^n w_{p,i}$ for all fixed points $p$ with $\epsilon(p) \neq \epsilon(q)$. Then $M$ does not admit a spin structure.
\end{lem}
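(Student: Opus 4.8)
The plan is to argue by contradiction using the vanishing formula of Theorem \ref{t25}. Suppose $M$ admits a spin structure. Since $M$ is compact and its fixed point set is discrete, $M^{S^1}$ is finite, and Theorem \ref{t25} gives the identity
\[
0=\sum_{p \in M^{S^1}} \epsilon(p) \prod_{i=1}^n \frac{t^{w_{p,i}/2}}{1-t^{w_{p,i}}}.
\]
I will read this as an identity of formal power series in the single variable $u=t^{1/2}$, which clears the half-integer exponents and equips each summand with a well-defined lowest degree.

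For each fixed point $p$ write $W(p)=\sum_{i=1}^n w_{p,i}$. First I would expand each summand near $u=0$: since every weight is a positive integer,
\[
\prod_{i=1}^n \frac{u^{w_{p,i}}}{1-u^{2w_{p,i}}}=u^{W(p)}\prod_{i=1}^n\frac{1}{1-u^{2w_{p,i}}}=u^{W(p)}\bigl(1+O(u^2)\bigr),
\]
so the summand for $p$ equals $\epsilon(p)\,u^{W(p)}$ plus terms of strictly higher degree, and every monomial it contributes has degree at least $W(p)$. Now set $W_{\min}=\min_{p\in M^{S^1}}W(p)$ and $S=\{p\in M^{S^1}:W(p)=W_{\min}\}$, a non-empty set. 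By the expansion above, the only summands that can contribute to the coefficient of $u^{W_{\min}}$ are those with $p\in S$, and each such summand contributes exactly $\epsilon(p)$. Hence the coefficient of $u^{W_{\min}}$ equals $\sum_{p\in S}\epsilon(p)$, and the identity forces it to vanish.

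The decisive step is to show this coefficient is in fact nonzero, which is where the hypothesis on $q$ enters. I claim every $p\in S$ satisfies $\epsilon(p)=\epsilon(q)$: if some $p\in S$ had $\epsilon(p)\neq\epsilon(q)$, the hypothesis would give $W(q)<W(p)=W_{\min}$, contradicting the minimality of $W_{\min}$. Therefore $\sum_{p\in S}\epsilon(p)=\epsilon(q)\,|S|=\pm|S|\neq 0$, contradicting the vanishing just established. Hence $M$ cannot admit a spin structure.

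I expect the only genuine subtlety to be the bookkeeping in the middle step, namely being certain that no fixed point with $W(p)>W_{\min}$ can contribute to the lowest-degree coefficient; this is exactly why the expansion must be organized as an honest power series in $u=t^{1/2}$ with a well-defined lowest degree, rather than manipulated as an analytic expression in $t$. Once that is set up, the remainder is a short comparison of signs, so I do not anticipate a hard computational obstacle.
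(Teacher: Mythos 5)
Your proof is correct and takes essentially the same route as the paper: assume a spin structure, expand the vanishing formula of Theorem \ref{t25} as a power series in $t^{1/2}$, and observe that the hypothesis on $q$ forces all contributions to a lowest-order coefficient to carry the same sign $\epsilon(q)$, so that coefficient cannot vanish. The only cosmetic difference is that you isolate the coefficient at the global minimum total weight $W_{\min}$, whereas the paper argues directly that the term $\epsilon(q)\,t^{\sum_i w_{q,i}/2}$ cannot be canceled; both rest on the same observation that fixed points with $\epsilon(p)\neq\epsilon(q)$ contribute only in strictly higher degrees.
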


\begin{proof}
Assume on the contrary that $M$ admits a spin structure. By Theorem \ref{t25},
\begin{center}
$\displaystyle 0=\sum_{p \in M^{S^1}} \epsilon(p) \cdot \prod_{i=1}^n \frac{t^{\frac{w_{p,i}}{2}}}{1-t^{w_{p,i}}}=\sum_{p \in M^{S^1}} \epsilon(p) \cdot t^{\frac{\sum_{i=1}^{n} w_{p,i}}{2}} \cdot \prod_{i=1}^{n} \left( 1+t^{w_{p,i}}+t^{2w_{p,i}}+\cdots \right)$. 
\end{center}
for all indeterminates $t$.

In the equation, the fixed point $q$ contributes a term $\epsilon(q) \cdot t^{\frac{\sum_{i=1}^{n} w_{q,i}}{2}}$. Since $\sum_{i=1}^n w_{q,i} < \sum_{i=1}^n w_{p,i}$ for every fixed point $p$ with $\epsilon(p) \neq \epsilon(q)$, the term $\epsilon(q) \cdot t^{\frac{\sum_{i=1}^{n} w_{q,i}}{2}}$ cannot be canceled out, which leads to a contradiction. \end{proof}

Using Lemma \ref{l31}, we prove Theorem \ref{p18}.

\begin{proof}[\textbf{Proof of Theorem \ref{p18}}]
Consider the circle action on $\mathbb{CP}^{2m}$ in Example \ref{e2}. For each fixed point $p_i$, $\epsilon(p_i)=(-1)^i$ and the weights at $p_i$ are $\{|j-i|\}_{0 \leq j \leq 2m, j \neq i}$, $0 \leq i \leq 2m$. For any integer $k$ such that $-m \leq k \leq m$, $\sum_{i=1}^{2m} w_{p_{m+k,i}}=m(m+1)+|k|^2$. Hence, $\sum_{i=1}^{2m} w_{p_j,i}>\sum_{i=1}^{2m} w_{p_m,i}$ if $j \neq m$. Therefore, by Lemma \ref{l31}, $\mathbb{CP}^{2m}$ does not admit a spin structure.
\end{proof}

\end{document}